\documentclass[12pt]{amsart}
\usepackage[all]{xy}
\usepackage{mathrsfs}
\usepackage[margin=1.4in]{geometry}

\usepackage{graphicx}
\usepackage[cyr]{aeguill}\usepackage[francais,english]{babel}
\usepackage{amsmath,amsfonts,amssymb}
\usepackage{color}
\usepackage[utf8]{inputenc}
\usepackage{comment}
\usepackage[shortlabels]{enumitem}
\usepackage{etoolbox}
\usepackage{float}
\usepackage{latexsym}
\usepackage{lipsum}
\usepackage{needspace}
\usepackage{tikz}
\usepackage{hyperref}

\newtheorem{theorem}{Theorem}
\numberwithin{theorem}{section}
\numberwithin{equation}{section}
\newtheorem{lemma}[theorem]{Lemma}

\newtheorem{proposition}[theorem]{Proposition}
\newtheorem{definition}[theorem]{Definition}
\newtheorem{corollary}[theorem]{Corollary}

\newcommand{\N}{\mathbb{N}}
\newcommand{\R}{\mathbb{R}}\newcommand{\Z}{\mathbb{Z}}
\newcommand{\map}[3]{ #1 : #2 \longrightarrow #3 }

\newcommand{\mapl}[5]{ #1 : #2 \longrightarrow #3 : #4 \longmapsto #5 }



\begin{document}

\title{On semilinear sets and asymptotically approximate groups}
\author{Arindam Biswas}
\address{Universit\"at Wien, Fakult\"at f\"ur Mathematik, 
	Oskar-Morgenstern-Platz 1, 1090 Wien, Austria \& Erwin Schr\"odinger International Institute for Mathematics and Physics (E.S.I.) Boltzmanngasse 9, 1090 Wien, Austria}
\curraddr{}
\email{arindam.biswas@univie.ac.at}
\thanks{}

\author{Wolfgang Alexander Moens}
\address{Universit\"at Wien, Fakult\"at f\"ur Mathematik\\
	Oskar-Morgenstern-Platz 1, 1090 Wien, Austria. }
\email{wolfgang.moens@univie.ac.at}

\keywords{approximate groups, growth in groups, additive number theory}

\begin{abstract}
Let $G$ be any group and $A$ be an arbitrary subset of $G$ (not necessarily symmetric and not necessarily containing the identity). The $h$-fold product set of $A$ is defined as 
$$A^{h} :=\lbrace a_{1}.a_{2}...a_{h} : a_{1},\ldots,a_n \in A \rbrace.$$ 
Nathanson considered the concept of an asymptotic approximate group. Let $r,l \in \N$. The set $A$ is said to be an $(r,l)$ approximate group in $G$ if there exists a subset $X$ in $G$ such that $|X|\leqslant l$ and $A^{r}\subseteq XA$. The set $A$ is an asymptotic $(r,l)$-approximate group if the product set $A^{h}$ is an $(r,l)$-approximate group for all sufficiently large $h$. \\

Recently, Nathanson showed that every finite subset $A$ of an abelian group is an asymptotic $(r,l')$ approximate group (with the constant $l'$ explicitly depending on $r$ and $A$). We generalise the result and show that, in an arbitrary abelian group $G$, the union of $k$ (unbounded) generalised arithmetic progressions is an asymptotic $(r,(4rk)^k)$-approximate group.
	
\end{abstract}

\maketitle	
	
\section{Introduction}
Let $(G,\cdot)$ be any group and let $A_{1},A_{2},\cdots ,A_{n}\subseteq G$ be non-empty subsets of $G$. The product set $A_{1}\cdots A_{n}$ is defined as 
$$A_{1}\cdots A_{n} :=\lbrace a_{1}\cdots a_{n} : a_{1} \in A_{1}, \ldots , a_n \in A_n\rbrace.$$
In a similar way, we define
 $$A^{n} := \lbrace a_{1}\cdots a_{n} : a_{1}\in A, \ldots , a_n \in A\rbrace.$$ 
 
 \subsection{Background and motivation} In recent times, approximate groups have become a very popular object of study in mathematics. The formal definition of an approximate group was introduced by Tao in \cite{myfav9} and a part of it was motivated by its use in the work of Bourgain-Gamburd  \cite{myfav12} on super-strong approximation for Zariski dense groups of $SL_{2}(\mathbb{Z})$. 
\begin{definition}[$K$-approximate group \cite{myfav9}]
\label{AppTao}
 	Let $(G,\cdot)$ be some group and $K\geqslant 1$ be some parameter. A finite set $A\subseteq G $ is called a $K$-approximate group if 
 	\begin{enumerate}
 		\item Identity of $G$, $e\in A$.
 		\item It is symmetric, i.e. if $a\in A$ then $a^{-1}\in A$.
 		\item There is a symmetric subset $X$ lying in $A.A$ with $|X|\leqslant K$ such that $A.A\subseteq X.A$
 	\end{enumerate}
 \end{definition}

 Even though the formal definition was from 2008, the study of sets similar to the form of being an approximate group has a long history in additive number theory starting from the work of Freiman in 1964 [see Freiman's theorem - \cite{myfav4}].

Nathanson considered a more general notion of an approximate group. For him, the set $A$ need not be finite, nor symmetric, nor contain the identity. This is the notion we shall consider for the rest of the paper.
\begin{definition}[$(r,l)$-approximate group \cite{myfav118}]
\label{Appgr}
	Let $r,l \in \N$.\footnote{$\mathbb{N}$ denotes the set of positive integers, $\mathbb{N}^{*}=\mathbb{N}\cup \lbrace 0\rbrace$.} A non-empty subset $A\subseteq G$ is an $(r,l)$ approximate group if there exists a set $X\subseteq G$ such that $$|X|\leqslant l \text{ and } A^{r}\subseteq XA.$$
\end{definition}
 
We trivially note that every subset $A$ of $G$ is a $(1,1)$-approximate group. As remarked by Nathanson in \cite{myfav118}, the definition of approximate group used here is less restrictive than the one used by Tao: if $A$ is a $K$-approximate group in the sense of Definition \ref{AppTao} then it is a $(2,K)$ approximate group in the sense of Definition \ref{Appgr}. The theory of approximate groups has found numerous applications in different branches of mathematics. It is difficult to list all of them but some of the seminal works using approximate groups can be found in \cite{myfav12}, \cite{myfav22}, \cite{myfav21}, \cite{myfav31} etc.  As our motivation is somewhat different, we do not go into the details of the above results here. 

Instead, we arrive at the definition of an asymptotic approximate group, which is a subset $A\subseteq G$ such that every sufficiently large power of $A$ is an $(r,l)$ approximate group.
\begin{definition}[Asymptotic $(r,l)$-approximate group - \cite{myfav118}]\label{AAppgr}
	Let $r,l \in \N$. A subset $A$ of a group $(G,\cdot)$ is an asymptotic $(r,l)$ approximate group if there exists a threshold $h_0 \in \N$ such that for each natural number $h \geq h_0$ there exists a set $X_h$ satisfying 
	$$|X_{h}|\leqslant l \text{ and } A^{hr}\subseteq X_{h}A^{h}.$$
\end{definition}

Recently, it was shown by Nathanson that every finite subset of an abelian group is an asymptotic approximate group.
\begin{theorem}[Theorem 6, \cite{myfav118}] \label{ThmNathanson}
	Let $r,k \in \N$ and consider a finite subset $A$ of cardinality $k$ in an abelian group. Then there exists an $l \in \N$ such that $A$ is an asymptotic $(r,l)$ approximate group.
\end{theorem}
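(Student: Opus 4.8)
The plan is to convert the group-theoretic covering problem into a purely combinatorial covering of lattice simplices, where the geometry of the exponent vectors can be controlled uniformly in $h$. Write $A = \{a_1,\dots,a_k\}$ and consider the exponent homomorphism $\phi : \Z^k \to G$ defined by $\phi(e_1,\dots,e_k) = a_1^{e_1}\cdots a_k^{e_k}$; since $G$ is abelian this is a homomorphism from $(\Z^k,+)$. Because the group is commutative, an $N$-fold product of elements of $A$ depends only on the multiplicities with which each $a_i$ occurs, so with $\Delta_N := \{e \in \Z^k : e_i \ge 0,\ \sum_{i} e_i = N\}$ one has $A^N = \phi(\Delta_N)$. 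The homomorphism property then gives, for any $S \subseteq \Z^k$, the identity $\phi(S + \Delta_h) = \phi(S)\,\phi(\Delta_h) = \phi(S)\,A^h$. Hence if I can produce, for each large $h$, a set $S_h \subseteq \Z^k$ with $|S_h| \le l$ and $\Delta_{hr} \subseteq S_h + \Delta_h$ (a Minkowski sum in $\Z^k$), then $X_h := \phi(S_h)$ satisfies $A^{hr} = \phi(\Delta_{hr}) \subseteq \phi(S_h)A^h = X_h A^h$ and $|X_h| \le l$, which is exactly what Definition \ref{AAppgr} requires. Note that this reduction never uses injectivity of $\phi$: the kernel of $\phi$ (which encodes torsion and relations among the $a_i$) only shrinks the image and can only help, so it suffices to solve the lattice problem in $\Z^k$.

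The core of the argument is then the combinatorial claim that $\Delta_{hr}$ can be covered by a number of translates of $\Delta_h$ that is bounded independently of $h$. To see this I would forget the last coordinate, identifying $\Delta_N$ with the corner simplex $\widehat{\Delta}_N = \{x \in \Z^{k-1} : x_i \ge 0,\ x_1 + \cdots + x_{k-1} \le N\}$; a translate of $\Delta_h$ by a vector on the hyperplane $\sum_i s_i = h(r-1)$ corresponds to an integer translate of $\widehat{\Delta}_h$. The key geometric observation is that $\widehat{\Delta}_h$ contains the lattice box $\{0,1,\dots,b\}^{k-1}$ with $b = \lfloor h/(k-1)\rfloor$, since any such point has coordinate sum at most $(k-1)b \le h$. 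On the other hand $\widehat{\Delta}_{hr}$ is contained in the box $\{0,1,\dots,hr\}^{k-1}$. Covering the large box by translates of the small box reduces to a one-dimensional covering along each of the $k-1$ axes: the segment $\{0,\dots,hr\}$ is covered by $\lceil (hr+1)/(b+1)\rceil$ translates of $\{0,\dots,b\}$, and this quantity tends to $r(k-1)$ as $h \to \infty$, so it is at most $2rk$ once $h$ is large. Taking products over the axes yields at most $(2rk)^{k-1}$ translates of $\widehat{\Delta}_h$ covering $\widehat{\Delta}_{hr}$; lifting back gives the desired $S_h$ with $l := (2rk)^{k-1}$, comfortably within the bound appearing in the abstract. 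The requirement that $h$ be large is precisely what produces the threshold $h_0$ in the asymptotic definition, which is harmless.

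The main obstacle is entirely concentrated in the covering step: I must guarantee that the number of translates does \emph{not} grow with $h$, and here the naive approach of peeling off an element of $A^{h(r-1)}$ fails because that set is unbounded. Circumventing this is exactly the point of inscribing a cube of side proportional to $h$ inside $\Delta_h$ rather than attempting to tile the large simplex by copies of the small simplex directly (same-orientation simplices do not tile, so a direct simplicial tiling is the wrong tool). The secondary subtlety is integrality: the inscribed-box side $b = \lfloor h/(k-1)\rfloor$ and the per-axis count involve floors and ceilings, so I would verify that the per-axis covering number stabilizes to $r(k-1)$ for large $h$, which both fixes $h_0$ and keeps $l$ independent of $h$. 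Once these two points are handled, the homomorphism reduction makes the passage back to the arbitrary abelian group $G$ immediate, with no further assumptions on $A$.
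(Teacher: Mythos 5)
Your proposal is correct and follows essentially the same route as the paper: reduce to the free abelian group $\Z^k$ via the exponent homomorphism, identify $A^N$ with the lattice simplex of exponent vectors summing to $N$, and cover the large simplex by translates of a cube of side proportional to $h/k$ inscribed in the small one, counting translates axis by axis. The only substantive difference is that you work directly in the integer lattice rather than first covering in $\R^{k}$ and then rounding translation vectors to lattice points, which avoids the paper's extra factor of $2^k$ and gives a slightly better constant; just handle the degenerate case $k=1$ (where $\lfloor h/(k-1)\rfloor$ is undefined) separately, as the paper does.
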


Nathanson also obtained an upper bound for $l$ of the form $n_{0}kb(r,k)$, where $n_{0}$ is the size of the torsion subgroup of the group $\langle A \rangle$ generated by $A$, and where $b(r,k)$ denotes the binomial co-efficient
$$b(r,k) = \begin{pmatrix}
	(r+1)(k-1)-1\\
	k-1
\end{pmatrix}.$$
We note that the theorem cannot be extended to free groups [see Theorem $1$ of \cite{myfav118}].

\subsection{Statement of results} In this article, we show- 

\begin{theorem}[Thm. \ref{ThmSemilinear}]\label{Thm1}
	Let $r,k \in \N$ and consider a union $A$ of $k$ unbounded generalised arithmetic progressions in an abelian group. Then $A$ is an approximate $(r,(4rk)^k)$ asymptotic group.
\end{theorem}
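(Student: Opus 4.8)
The plan is to peel off the structure of $A^{h}$, reduce the desired inclusion to a purely combinatorial covering of lattice simplices, and then settle that covering by an elementary inscribed-box/volume estimate; the asymptotic nature of the conclusion is what makes both reductions clean and what removes any torsion term of the kind appearing in Theorem~\ref{ThmNathanson}.

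First I would unwind $A^{h}$. Writing each unbounded generalised arithmetic progression as $P_{j}=b_{j}+M_{j}$, where $b_{j}\in G$ is a base point and $M_{j}=\N^{*}a_{j,1}+\cdots+\N^{*}a_{j,d_{j}}$ is the finitely generated submonoid it spans, set $M:=M_{1}+\cdots+M_{k}$. Since each $M_{j}$ is an additive monoid, $\underbrace{M_{j}+\cdots+M_{j}}_{m}=M_{j}$ for every $m\geq 1$, so grouping the $h$ factors of $A=\bigcup_{j}P_{j}$ by their multiplicities $\mathbf m=(m_{1},\ldots,m_{k})$ with $\sum_{j}m_{j}=h$ gives
$$A^{h}=\bigcup_{\mathbf m\in\Delta_{h}}\Big(L(\mathbf m)+\sum_{j\in\mathrm{supp}(\mathbf m)}M_{j}\Big),\qquad L(\mathbf m):=\textstyle\sum_{j}m_{j}b_{j},$$
where $\Delta_{h}=\{\mathbf m\in(\N^{*})^{k}:\sum_{j}m_{j}=h\}$ and $L:\Z^{k}\to G$ is the induced homomorphism. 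From this I extract the only two facts I need: since each partial monoid sum lies in $M$, one has $A^{hr}\subseteq L(\Delta_{hr})+M$; and, as soon as $h\geq k$ so that the full-support simplex $\Delta_{h}^{+}=\{\mathbf m\in\Delta_{h}:m_{j}\geq 1\ \forall j\}$ is non-empty, one has $A^{h}\supseteq L(\Delta_{h}^{+})+M$. This is precisely where I would invoke the asymptotic hypothesis: for $h\geq h_{0}\geq k$ the monoid $M$ appears in full inside $A^{h}$.

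Next I would reduce $A^{hr}\subseteq X_{h}A^{h}$ to a covering statement in $\Z^{k}$. By the two bounds above it suffices to produce $S_{h}\subseteq\Z^{k}$ with $\Delta_{hr}\subseteq S_{h}+\Delta_{h}^{+}$: applying $L$ and using $M+M=M$ then gives $A^{hr}\subseteq L(\Delta_{hr})+M\subseteq L(S_{h})+L(\Delta_{h}^{+})+M\subseteq L(S_{h})+A^{h}$, so $X_{h}:=L(S_{h})$ works with $|X_{h}|\leq|S_{h}|$. Because $\Delta_{h}^{+}=\mathbf 1+\Delta_{h-k}$, this is the same as covering $\Delta_{hr}$ by $|S_{h}|$ integer translates of $\Delta_{h-k}$; note the bound is automatically independent of $G$, since only $|S_{h}|$ matters and not $L$ itself. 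The heart of the matter is thus the covering lemma: for all large $h$, $\Delta_{hr}$ is covered by at most $(4rk)^{k}$ integer translates of $\Delta_{h-k}$. I would prove it by projecting onto the first $k-1$ coordinates, so that both simplices become solid corner simplices $S(N)=\{\mathbf x\in(\N^{*})^{k-1}:\sum_{j}x_{j}\leq N\}$, and then cover $S(hr)$ with the grid generated by a large inscribed box: the box of integer side $\beta=\lfloor (h-k)/(k-1)\rfloor$ sits inside $\Delta_{h-k}$, its $\beta\Z^{k-1}$-translates tile space, and retaining only those meeting $S(hr)$ leaves at most $(\lceil hr/\beta\rceil+1)^{k-1}$ of them; any point of $S(hr)$ lies in its box, hence in the corresponding simplex-translate. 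As $h\to\infty$ this count is $\big(r(k-1)+O(1)\big)^{k-1}$, comfortably below $(4rk)^{k}$, the factor $4$ and the extra exponent leaving room for the integer round-offs and the box-to-simplex passage.

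The step I expect to be the main obstacle is this covering lemma, and specifically the bookkeeping needed to keep the constant explicit: one must check that the inscribed-box grid covers every \emph{lattice} point of the corner simplex (not merely its real hull), that $\beta$ is positive and the floor/ceiling round-offs are controlled once $h\geq h_{0}$, and that the final count stays below $(4rk)^{k}$ for every admissible $h$ and not only in the limit. By contrast the structural identity for $A^{h}$ and the reduction to lattice covering are routine, so the entire difficulty is concentrated in making the simplex-covering count explicit and uniform in $h$.
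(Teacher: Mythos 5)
Your proposal is correct and follows essentially the same route as the paper: decompose $hA$ by the multiplicities of the progressions used, observe that $rhA \subseteq L(\Delta_{hr})+M$ while $hA \supseteq L(\Delta_h^+)+M$ for $h\geq k$, reduce to covering the lattice simplex of compositions of $hr$ by integer translates of the full-support simplex, and settle that covering with an inscribed-box/grid count. The paper does the same thing, only phrased via a lift to the free abelian group $\Z^{k+l}$ and a cube of side $h/k$ inscribed in $\Delta_k(h)$ rather than your box of side $\lfloor(h-k)/(k-1)\rfloor$ in dimension $k-1$; these are cosmetic differences and your constant bookkeeping lands under $(4rk)^k$ just as the paper's does.
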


We note that every finite set $A$ is also (in a trivial sense) an unbounded generalised arithmetic progression. So, by specialisation, we recover Nathanson's theorem \ref{ThmNathanson}. We further note that, unlike in theorem \ref{ThmNathanson}, our constant $l = (4rk)^k$ does not depend on the torsion subgroup of $\langle A \rangle$. \\

We will deduce-

\begin{corollary}[Cor. \ref{mainresult}]\label{Thm2}
		Let $r,k \in \N$ and consider a union $A$ of $k$ (bounded or unbounded) generalised arithmetic progressions in an abelian group. Then there exists a natural number $l$ such that $A$ is an asymptotic $(r,l)$ approximate group.
\end{corollary}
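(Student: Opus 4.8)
The plan is to reduce the bounded case to the already-proved unbounded case (Theorem~\ref{Thm1}) by a simple embedding-and-padding trick, rather than re-running the combinatorial machinery. The key observation is that a \emph{bounded} generalised arithmetic progression is just a special case of an unbounded one: if $P = \{v_0 + x_1 v_1 + \cdots + x_d v_d : 0 \leqslant x_i < L_i\}$ is a bounded progression with basis vectors $v_1,\ldots,v_d$ and lengths $L_1,\ldots,L_d$, then dropping the upper constraints $x_i < L_i$ (i.e.\ allowing $x_i$ to range over all of $\N^*$) produces an unbounded progression $\widehat{P} \supseteq P$ built from the same data. So given $A = P_1 \cup \cdots \cup P_k$, a union of $k$ (bounded or unbounded) progressions, I would pass to $\widehat{A} = \widehat{P_1} \cup \cdots \cup \widehat{P_k}$, a union of $k$ \emph{unbounded} progressions with $A \subseteq \widehat{A}$.

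The first step is therefore to fix notation for bounded and unbounded generalised arithmetic progressions and to record the containment $A \subseteq \widehat{A}$ together with $\widehat{A}^{h} \supseteq A^{h}$ for every $h$, which is immediate from the definition of the product set. Applying Theorem~\ref{Thm1} to $\widehat{A}$ yields a threshold $h_0$ and, for each $h \geqslant h_0$, a set $X_h$ with $|X_h| \leqslant (4rk)^k$ and $\widehat{A}^{hr} \subseteq X_h \widehat{A}^{h}$. The difficulty is that this covering is stated for the \emph{enlarged} set $\widehat{A}$, whereas the corollary asks for a covering of powers of the original $A$; we cannot simply intersect, because $A^{hr} \subseteq \widehat{A}^{hr} \subseteq X_h \widehat{A}^{h}$ controls $A^{hr}$ only in terms of $\widehat{A}^{h}$, not $A^{h}$.

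This mismatch is the main obstacle, and the way around it is to exploit that $A$ is finite whenever all $k$ progressions are bounded, and to treat the mixed case by a finiteness-of-the-discrepancy argument. In the fully bounded case $A$ is a finite set, so Nathanson's Theorem~\ref{ThmNathanson} already applies directly and there is nothing to prove beyond invoking it; the only content of the corollary is the \emph{mixed} case, where some $P_i$ are bounded and some are unbounded. Here I would argue that the bounded summands contribute only a bounded perturbation: writing $A = B \cup U$ with $B$ the union of the bounded progressions (a finite set) and $U$ the union of the unbounded ones, one has $A^{h} = \bigcup_{i+j=h} B^{i} U^{j}$ (interpreting $B^0 = U^0 = \{e\}$), and for large $h$ the terms with small $j$ are negligible in the sense that $B^{i}U^{j}$ is eventually absorbed, up to a bounded translate, into $U^{h}$ or into the already-controlled covering of $\widehat{U}$. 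Making this absorption precise --- quantifying how the finite set $B$ shifts the unbounded part and checking that the number of translates stays within the claimed bound $l$ --- is where the real work lies, and it is essentially a matter of combining the covering from Theorem~\ref{Thm1} applied to $U$ with a uniform bound on $|B^{i}|$ for $0 \leqslant i \leqslant$ a fixed constant depending on $r$. The output is a single $l$, and since the corollary only asserts the \emph{existence} of such an $l$ rather than an explicit value, these bounded perturbation factors may be absorbed into $l$ freely, which keeps the argument short.
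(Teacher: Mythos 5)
There is a genuine gap, and it sits exactly where you say ``the real work lies''. You correctly diagnose that enlarging $A$ to $\widehat{A}$ is useless on its own (it controls $rhA$ only by translates of $h\widehat{A}$, not of $hA$), but the repair you sketch for the mixed case does not go through. Writing $A = B \cup U$ with $B$ finite, the terms of $rhA = \bigcup_{i+j=rh}(iB+jU)$ with $i$ large are \emph{not} absorbed ``up to a bounded translate'' into $hU$ or into the covering of $\widehat{U}$: take $G=\Z^2$, $U=P\bigl((0,0),(1,0)\bigr)$ and $B=\{(0,1)\}$; then $rhA$ contains the slab $\{(n,i) : n\geq 0,\ 0\leq i\leq rh-1\}$, which needs on the order of $rh$ translates of the ray $hU$. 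One must instead cover by translates of $hA$ itself, and for the intermediate terms $iB+jU$ with $i+j=rh$ and both indices large this would require covering $iB$ by boundedly many translates of $i'B$ and $jU$ by translates of $j'U$ with $i'+j'=h$, where the ratios $i/i'$ and $j/j'$ vary over an unbounded range; the covering statements available to you (Theorem \ref{ThmNathanson}, Theorem \ref{Thm1}) hold only for the fixed ratio $r$ and large base, and your proposed ``uniform bound on $|iB|$ for $i$ up to a constant depending on $r$'' does not address the indices $i$ of size comparable to $rh$. So the argument is incomplete, and completing it along these lines would be substantially harder than the problem requires.

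The idea you are missing is the paper's one-line reduction: a \emph{bounded} generalised arithmetic progression is itself a \emph{finite union of unbounded} ones, because every singleton $\{x\}$ is the unbounded progression $P(x,0)$; for instance $P_m(a,b) = P(a,0)\cup P(a+b,0)\cup\cdots\cup P(a+mb,0)$, and likewise in higher dimension by exhausting the bounded directions. Hence $A$ is a union of some finite number $k'\geq k$ of unbounded linear sets, and Theorem \ref{ThmSemilinear} applies directly to give $l=(4rk')^{k'}$. Since Corollary \ref{mainresult} asserts only the existence of some $l$, the loss from $k$ to $k'$ is irrelevant, and no decomposition or absorption argument is needed.
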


To show the above, we shall first give an alternate proof of Nathanson's theorem in the case of abelian groups [see sec \ref{abAA}]. After that, we shall adapt the method of this proof in the context of the finite collection of generalised arithmetic progressions (semi-linear sets) [see sec \ref{APAA}] to show Theorem \ref{Thm1} and Corollary \ref{Thm2}. \\

\paragraph{\textbf{Notation.}} We make an important remark. In the rest of this article, we will only work with abelian groups and we will follow the convention of using the \emph{additive notation} $(G,+)$ rather than the multiplicative notation $(G,\cdot)$ for an abelian group $G$. In order to make our exposition consistent, we should therefore consider \emph{sum sets} instead of product sets. So we will use, in particular, the sum sets $$1A := A, 2A := A + A := \{ a_1 + a_2 | a_1, a_2 \in A \}, $$ and so on. This additive notation has a second advantage: it allows us to reserve from now on the notation $A^n$ for Cartesian products $$ A^n := \{ (a_1,a_2,\ldots,a_n) | a_1,\ldots,a_n \in A \}.$$

\vspace{10mm}
\section{Preliminaries}
In this section we discuss the basic notions that we shall require for the rest of the paper.

\begin{definition}[Arithmetic progressions]
	A subset $X$ of an abelian group $(G,+)$ is an \emph{unbounded} arithmetic progression if there exist $a,b \in G$ such that
	 $$X = P(a,b) := \{ a + n b | \,n \in \Z_{\geq 0} \}.$$
	 A subset $Y$ of $G$ is a \emph{bounded} arithmetic progression if there exist $a,b \in G$ and $m \in \Z_{\geq 0}$ such that $$Y = P_m(a,b) := \{ a + n b |\, n \in [0,m] \cap \Z \}.$$
\end{definition}

More generally, we will use the following objects.

\begin{definition}[Generalised arithmetic progressions]
	A subset $\mathbf{X}$ of an abelian group $(G,+)$ is an \emph{unbounded} generalised arithmetic progression of dimension $d$ if there exist $a,b_{1},b_{2},\cdots, b_{d} \in G$ such that
	$$\mathbf{X} = P(a,b_{1},\cdots, b_{d}) := \{ a + n_{1}b_{1}+\cdots + n_{d}b_{d} | \,n_{1},\cdots,n_{d} \in \Z_{\geq 0} \}.$$
	A subset $\mathbf{Y}$ of $G$ is a \emph{bounded} generalised arithmetic progression of dimension $d$ if there exist $a,b \in G$ and $m \in \Z_{\geq 0}$ such that 
	$$\mathbf{Y} =  P_{m_{1},\cdots, m_{d}}(a,b_{1},\cdots, b_{d}) := \{ a + n_{1}b_{1}+\cdots + n_{d}b_{d} | \, n_{i} \in [0,m_{i}] \cap \Z, 1\leqslant i\leqslant d \}.$$	
\end{definition}

A generalised arithmetic progression is also known as a linear set. Thus we have the notion of bounded linear set and unbounded linear set.

\begin{definition}[Semilinear set]
	A finite union of unbounded linear sets is called a semilinear set.
\end{definition}

\begin{definition}[External covering number]
	Consider an abelian group $(G,+)$ and three subsets $S_1,S_2,T$. If $S_1$ can be covered by finitely-many $T$-translates of $S_2$, then we define the covering number $$\kappa_{G,T}(S_1,S_2) := \min \lbrace  k \in \N \mid \exists t_1 , \ldots , t_k \in T : S_1 \subseteq \bigcup_{1 \leq i \leq k} (t_i + S_2) \rbrace.$$
\end{definition}

\begin{definition}[Simplex; cube; dilation]
	Let $k  \in \N \cup \{ 0 \}$ and $\rho \in \R_{\geq 0}$. We define $$\Delta_k(\rho) := \{ (v_1,\ldots,v_k) \in \R_{\geq 0}^k \mid v_1 + \cdots + v_k \leq \rho \} \subseteq \R^k.$$ We also define the standard (closed) cube  $$C_k(\rho) := \{ (v_1,\ldots,v_k) \in \R_{\geq 0}^k | v_1,\ldots,v_k \leq \rho\}$$ and the open cube 
	$$C_k(\rho)^\circ := \{ (v_1,\ldots,v_k) \in \R_{\geq 0}^k | v_1,\ldots,v_k < \rho\}.$$
	For a subset $Y$ of $\R^k$, we define the \emph{dilation} $\rho \ast Y := \{ r \cdot v \mid v \in Y \}.$
\end{definition}

We trivially note that $\rho \ast \Delta_k(1) = \Delta_k(\rho)$.

\vspace{10mm}
\section{Every finite subset in an abelian group is an asymptotic approximate group}\label{abAA}

In this section, we give an alternate proof of Nathanson's result.
\begin{lemma}[Covering the simplex] \label{EffTopCov}
	Let $k,r \in \N $. Then $\Delta_{k}(r)$ can be covered by $(2rk)^k$-many $\R^{k}$-translations of the open cube $C_{k}(1/k)^\circ$ in $\R^{k}$.  
\end{lemma}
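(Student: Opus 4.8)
The plan is to cover the simplex $\Delta_k(r)$ by a regular grid of small cubes of side $1/k$. Consider the lattice of points in $\R_{\geq 0}^k$ whose coordinates are nonnegative integer multiples of $1/k$, and attach to each such lattice point $p$ the half-open cube $p + C_k(1/k)^\circ$ shifted appropriately so that these cubes tile $\R_{\geq 0}^k$ without overlap. Any point $v \in \Delta_k(r)$ lies in exactly one such cube, so it suffices to bound the number of grid cubes that actually meet $\Delta_k(r)$. The key geometric observation is that if a cube of the grid meets $\Delta_k(r)$, then its ``lower corner'' lattice point $p$ satisfies a coordinate-sum bound only slightly larger than $r$: since every point of the cube exceeds $p$ coordinatewise and the cube has side $1/k$, a point $v$ in the cube with $v_1 + \cdots + v_k \leq r$ forces $p_1 + \cdots + p_k < r$, because the total coordinate spread across the cube is at most $k \cdot (1/k) = 1$.

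The main step is therefore a counting estimate. I would index the relevant cubes by integer vectors $(m_1,\ldots,m_k) \in \Z_{\geq 0}^k$ via $p = (m_1/k,\ldots,m_k/k)$, and show that the cube meets $\Delta_k(r)$ only if $m_1 + \cdots + m_k \leq rk$ (using the spread bound above, with a careful treatment of the boundary to accommodate the open versus closed distinction). The number of such integer vectors is the number of lattice points of $\Delta_k(rk) \cap \Z^k$, which is a standard binomial count $\binom{rk + k}{k}$. The obstacle is to bound this binomial cleanly by $(2rk)^k$: I expect to use the crude estimate $\binom{rk+k}{k} \leq \frac{(rk+k)^k}{k!}$ together with $(r+1)^k \leq (2r)^k$ (valid since $r \geq 1$) and $k^k / k! \leq e^k \leq \ldots$, so some slack in the constant $2$ must be verified rather than merely asserted.

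The slightly delicate part, and where I would be most careful, is the open-cube bookkeeping: the cubes $C_k(1/k)^\circ$ are open, so a naive integer grid of such cubes does not literally cover $\R_{\geq 0}^k$ (the faces are missed). I would handle this either by translating the grid of open cubes by a small generic offset $\varepsilon(1,\ldots,1)$ so that no point of the compact simplex $\Delta_k(r)$ lands on a face, and then taking $\varepsilon \to 0$ is unnecessary because a single sufficiently small $\varepsilon$ suffices for the fixed compact set; or alternatively by enlarging the index range by one in each coordinate and absorbing the extra cubes into the constant. Either way the covering count stays of the form $(2rk)^k$.

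In summary, the proof reduces to (i) a geometric spread bound showing only cubes with bounded lower-corner coordinate sum can meet $\Delta_k(r)$, (ii) a lattice-point count giving a binomial coefficient, and (iii) an elementary inequality bounding that binomial by $(2rk)^k$; the only genuine subtlety is ensuring the \emph{open} cubes genuinely cover the compact simplex, which I would resolve with a small translation of the grid.
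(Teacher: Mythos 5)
Your proof is correct, but it takes a different (and in fact sharper) route than the paper's. The paper's own argument is a two-line reduction: it first observes that $\Delta_k(r)\subseteq C_k(r)$ and that the \emph{closed} cube $C_k(1/2k)$ sits inside (a translate of) $C_k(1/k)^\circ$, and then tiles the bounding cube $C_k(r)$ by exactly $(2rk)^k$ translates of $C_k(1/2k)$; the halving of the side length is precisely its device for dodging the open/closed issue, and the simplex structure is never used. You instead keep the full side length $1/k$, cover the simplex itself by the grid of half-open cells $p+C_k(1/k)^\circ$ with $p\in\frac1k\Z_{\geq 0}^k$, and count only the cells that actually meet $\Delta_k(r)$, which gives the lattice-point count $\binom{rk+k}{k}$. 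Two remarks. First, the "obstacle" you flag in bounding the binomial is not one: since $k!\geq 1$ and $r\geq 1$, you have $\binom{rk+k}{k}\leq (rk+k)^k=\bigl((r+1)k\bigr)^k\leq (2rk)^k$, with no need for $k^k/k!\leq e^k$; alternatively, the number of $m\in\Z_{\geq 0}^k$ with $\sum_i m_i\leq rk$ is at most $(rk+1)^k\leq(2rk)^k$. Second, your worry about the open faces is moot under the paper's conventions: $C_k(\rho)^\circ$ is defined as $\{v\in\R_{\geq 0}^k \mid v_1,\ldots,v_k<\rho\}=[0,\rho)^k$, so the grid of translates by $\frac1k\Z_{\geq 0}^k$ literally tiles $\R_{\geq 0}^k$ and no $\varepsilon$-shift is needed (though your shift is harmless). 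The payoff of your approach is a genuinely better covering number, roughly $\binom{(r+1)k}{k}$, which grows only singly exponentially in $k$ rather than like $k^k$; the payoff of the paper's approach is brevity. Since the lemma only claims the bound $(2rk)^k$, both are adequate.
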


\begin{proof}
	Since $C_k(1/2k)$ can be translated into $C_k(1/k)^\circ$, and since $\Delta_k(r)$ is contained in $C_k(r)$, we need only cover $C_k(r)$ with $(2rk)^k$-many translations of $C_k(1/2k)$. And doing this is quite straight-forward. 
\end{proof}

We see, in particular, that  $\kappa_{\R^{k},\R^{k}}(\Delta_{k}(r),C_{k}(1/k)^\circ)$ is well-defined. 

\begin{lemma}[Covering in the lattice] \label{LemCovLat}
	Let $k,r,h \in \N$ with $h > 2 k$. Then we can cover $\Delta_{k}(r h) \cap \Z^{k}$ with finitely-many $\Z^{k}$-translates of $\Delta_{k}(h)^\circ \cap \Z^{k}$. In fact, we have the bound \begin{equation} \kappa_{\Z^{k},\Z^{k}}(\Delta_{k}(r h) \cap \Z^{k},\Delta_{k}(h)^\circ \cap \Z^{k}) \leq (4rk)^k . \label{EqCoverBd} \end{equation}
\end{lemma}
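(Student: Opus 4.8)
The plan is to deduce the lattice-covering bound \eqref{EqCoverBd} from the continuous covering provided by Lemma \ref{EffTopCov} via a rescaling argument. The key observation is that covering the large simplex $\Delta_k(rh)$ by translates of the ``hole'' $\Delta_k(h)^\circ$ is, after dividing all coordinates by $h$, exactly the problem of covering $\Delta_k(r)$ by translates of $\Delta_k(1)^\circ$. So first I would pass to the rescaled picture: a vector $v \in \Z^k$ lies in $\Delta_k(rh)$ iff $\tfrac{1}{h} v \in \Delta_k(r)$, and a translate $t + \Delta_k(h)^\circ$ (with $t \in \Z^k$) corresponds under scaling by $1/h$ to $\tfrac{1}{h}t + \Delta_k(1)^\circ$. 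Thus it suffices to produce a good covering of $\Delta_k(r)$ by open unit simplices with translation centres in $\tfrac{1}{h}\Z^k$, and then multiply the centres back by $h$ to land in $\Z^k$.

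The bridge to Lemma \ref{EffTopCov} is that the open cube $C_k(1/k)^\circ$ fits inside the open unit simplex $\Delta_k(1)^\circ$: any point of $C_k(1/k)^\circ$ has each coordinate strictly below $1/k$, so the coordinate sum is strictly below $1$, placing it in $\Delta_k(1)^\circ$. Hence a covering of $\Delta_k(r)$ by $(2rk)^k$ translates of $C_k(1/k)^\circ$ is a fortiori a covering by the same number of translates of $\Delta_k(1)^\circ$. The next step I would take is to replace the real translation vectors from Lemma \ref{EffTopCov} by nearby lattice-rescaled vectors in $\tfrac{1}{h}\Z^k$. Since $C_k(1/k)^\circ$ is genuinely smaller than $\Delta_k(1)^\circ$ (there is strictly positive slack between the cube and the simplex boundary, of order $1 - k\cdot\tfrac{1}{k}$ after accounting for the $1/(2k)$ margin built into the proof of Lemma \ref{EffTopCov}), a perturbation of each centre by a vector of sup-norm at most $1/h$ still keeps the perturbed cube inside a translate of $\Delta_k(1)^\circ$. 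The hypothesis $h > 2k$ is exactly what guarantees this slack dominates the lattice mesh $1/h$, so the rounding does no harm.

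The main obstacle I anticipate is bookkeeping the constant and making the perturbation argument fully rigorous at the boundary. Lemma \ref{EffTopCov} gives $(2rk)^k$ cubes, but the stated bound is $(4rk)^k$, i.e. a factor $2^k$ larger; this suggests one should not merely round the $(2rk)^k$ centres but rather refine the covering, replacing each continuous cube by slightly more than one lattice-translate to absorb all integer points near its boundary — each real cube of side $1/(2k)$ may require covering by a small bounded number (independent of $h$, at most $2^k$ in the worst case) of the integer holes. I would make this precise by a direct counting argument: discretise $\Delta_k(r)$ at scale $1/h$ and show each of the $(2rk)^k$ slabs contributes at most $2^k$ lattice holes, yielding the product bound $(2rk)^k \cdot 2^k = (4rk)^k$. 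The remaining verification — that every integer point of $\Delta_k(rh)$ is captured and that each chosen centre indeed lies in $\Z^k$ after the final multiplication by $h$ — is routine once the geometry is set up, so I would state it briefly rather than expand it in full.
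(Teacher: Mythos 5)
Your proposal is correct and follows essentially the same route as the paper: the paper dilates the continuous covering of Lemma \ref{EffTopCov} by $h$ (rather than rescaling by $1/h$, which is the same argument viewed from the other end), uses the inclusion $C_k(h/k)^\circ \subseteq \Delta_k(h)^\circ$, and absorbs the non-integrality of the translation vectors by replacing each shifted cube with at most $2^k$ integer translates (valid since $h/k>2$), arriving at exactly your count $(2rk)^k\cdot 2^k=(4rk)^k$. The only difference is that your intermediate ``perturbation with slack'' variant is unnecessary once you commit to the $2^k$-per-cube rounding, which is what the paper does.
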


\begin{proof}
	Let us use the abbreviation $\kappa := (2rk)^k$. According to lemma \ref{EffTopCov}, we can select $v_1,\ldots,v_\kappa \in \R^{k}$ such that $$\Delta_{k}(r) \subseteq \bigcup_{1 \leq i \leq \kappa} (v_i + C_{k}(1/k)^\circ).$$ By using the dilation $x \mapsto h \ast x$ in $\R^{k}$, we obtain 
	\begin{eqnarray}
	\Delta_{k}(rh) \subseteq \bigcup_{ 1 \leq i \leq \kappa } ( h \cdot v_i + C_{k}(h/k)^\circ ). \label{TranslScale}
	\end{eqnarray} 
	We note that the vectors $h \cdot v_i = (h v_{i,1},\ldots,h v_{i,k-1})$ need not have \emph{integer} coordinates. But we have assumed that $h/k > 2$. So, for each $h \cdot v_i$, there exist (not necessarily distinct) elements $w_{i,1} , \ldots , w_{i,2^k} \in \Z^k$ such that \begin{equation} h \cdot v_i + C_k(h/k)^\circ \subseteq \bigcup_{1 \leq j \leq 2^k} (w_{i,j} + C_k(h/k)^\circ) \subseteq \bigcup_{1 \leq j \leq 2^k}(w_{i,j} + \Delta_k(h)^\circ) \label{Wobble}.\end{equation} 
	So, by combining \ref{TranslScale} and \ref{Wobble}, we obtain	$$
	\Delta_k(rh) \subseteq \bigcup_{1 \leq i \leq \kappa} \bigcup_{1 \leq j \leq 2^k} (w_{i,j} + \Delta_k(h)^\circ)
	.$$	By intersecting with $\Z^k$, we finally obtain \ref{EqCoverBd}: $$\Delta_k(rh) \cap \Z^k \subseteq 
	\bigcup_{1 \leq i \leq \kappa} \bigcup_{1 \leq j \leq 2^k} (w_{i,j} + (\Delta_k(h)^\circ \cap \Z^k)) .$$
\end{proof}

\begin{proposition}[Free abelian groups] \label{ExAbelian}
	Let $k,r \in \N$. Consider the free abelian group $(\Z^k,+)$ with a free generating set ${B} := \{e_1,\ldots,e_k\} $. Then, for every natural number $h > 2k$, we have $$ \kappa_{\Z^k,\Z^k}(rh {B} , {h B} \cap \Z_{> 0}^k) \leq (4rk)^k.$$
\end{proposition}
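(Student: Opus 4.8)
The plan is to translate the statement, which concerns \emph{level slices} of the lattice $\Z^k$, into a covering problem for lattice simplices in one lower dimension, where Lemma \ref{LemCovLat} (or the explicit tiling behind Lemma \ref{EffTopCov}) applies. First I would unwind the two sets. Since $B = \{e_1,\ldots,e_k\}$, the sum set $rhB$ is exactly the slice $\{v \in \Z_{\geq 0}^k : v_1 + \cdots + v_k = rh\}$, while $hB = \{v \in \Z_{\geq 0}^k : v_1 + \cdots + v_k = h\}$, so that the covering set is the strictly positive slice $hB \cap \Z_{>0}^k = \mathbf{1} + \{v \in \Z_{\geq 0}^k : v_1 + \cdots + v_k = h-k\}$, where $\mathbf{1} = e_1 + \cdots + e_k$. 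Thus both sets sit on affine hyperplanes $\{\sum_i v_i = N\}$, and the covering set is, up to the translation by $\mathbf{1}$, the level-$(h-k)$ slice.

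Next I would set up a translation-equivariant identification of each slice with a full-dimensional lattice simplex. Forgetting the last coordinate gives a bijection between $\{v \in \Z_{\geq 0}^k : \sum_i v_i = N\}$ and $\Delta_{k-1}(N) \cap \Z^{k-1}$, and it intertwines arbitrary $\Z^{k-1}$-translations with translations of $\Z^k$ along the kernel of $v \mapsto \sum_i v_i$ (the inter-hyperplane part being absorbed since the identification forgets the level). Under it, $rhB$ becomes $\Delta_{k-1}(rh) \cap \Z^{k-1}$ and $hB \cap \Z_{>0}^k$ becomes a translate of $\Delta_{k-1}(h-k) \cap \Z^{k-1}$. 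Hence $\kappa_{\Z^k,\Z^k}(rhB, hB \cap \Z_{>0}^k)$ equals the number of $\Z^{k-1}$-translates of $\Delta_{k-1}(h-k) \cap \Z^{k-1}$ needed to cover $\Delta_{k-1}(rh) \cap \Z^{k-1}$, and it remains to bound this by $(4rk)^k$.

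The hard part is the bookkeeping of the constant, and it is caused entirely by the strict positivity $\cap\, \Z_{>0}^k$: passing to strictly positive coordinates strips a boundary layer of unit thickness from each of the $k$ facets, shrinking the effective radius of the covering simplex from $h$ to $h-k$. A black-box application of Lemma \ref{LemCovLat} would then force the dilation factor up from $r$ to $2r$ (to absorb $rh = r(h-k) + rk$), and the attendant wobble factor $2^{k}$ would push the count to roughly $(8rk)^k$, overshooting the target. I expect the clean bound to come instead from exploiting the standing hypothesis $h > 2k$: it guarantees $h-k > h/2$, so the radius ratio $rh/(h-k)$ stays below $2r$, and one can inscribe an honest integer box of side $\lfloor (h-k)/(k-1)\rfloor$ inside $\Delta_{k-1}(h-k)$ and tile the cube $\{0,1,\ldots,rh\}^{k-1} \supseteq \Delta_{k-1}(rh)\cap\Z^{k-1}$ by its translates. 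This produces at most $(2r(k-1)+1)^{k-1}$ translates, and the generous gap between $(2r(k-1))^{k-1}$ and $(4rk)^k$ absorbs all the floors and rounding terms. The only routine checks I would still carry out are that $h > 2k$ keeps the inscribed box nonempty and that the resulting count stays below $(4rk)^k$ for every admissible $h$.
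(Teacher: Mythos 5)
Your reduction is exactly the paper's: both arguments project the level slices onto $\Z^{k-1}$ (the paper does this via the affine section $f(y)=(y,\,hr-\sum_i y_i)$ of the forgetful map), identify $rhB$ with $\Delta_{k-1}(rh)\cap\Z^{k-1}$ and $hB\cap\Z_{>0}^k$ with a translated lattice simplex, and then cover in dimension $k-1$. Where you part ways is the covering step, and your stated reason for the detour is a misreading of Lemma \ref{LemCovLat}: that lemma covers $\Delta_{k-1}(rh)\cap\Z^{k-1}$ by translates of the \emph{open} simplex $\Delta_{k-1}(h)^\circ\cap\Z^{k-1}$, and this open-simplex lattice set is exactly your set $\mathbf{1}+(\Delta_{k-1}(h-k)\cap\Z^{k-1})$ --- both equal $\{y\in\Z_{\geq 1}^{k-1}: y_1+\cdots+y_{k-1}\leq h-1\}$. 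So the strict positivity costs nothing: the black-box application gives $(4r(k-1))^{k-1}\leq (4rk)^k$ directly, with no inflation of the dilation factor from $r$ to $2r$ and no extra $2^k$; this is precisely what the paper does. Your substitute --- inscribing the integer box of side $s=\lfloor (h-k)/(k-1)\rfloor$ in $\Delta_{k-1}(h-k)$ and tiling $\{0,\ldots,rh\}^{k-1}$ by its translates --- is nevertheless correct: $h>2k$ gives $s+1>(h-k)/(k-1)>h/(2(k-1))$, hence at most $\lceil (rh+1)/(s+1)\rceil^{k-1}\leq (2r(k-1)+1)^{k-1}\leq (2rk)^{k-1}\leq (4rk)^k$ translates, and it has the mild virtue of bypassing Lemmas \ref{EffTopCov} and \ref{LemCovLat} (in particular the real-to-integer ``wobble'') entirely. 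Do dispose of $k=1$ separately, as the paper does, since $\Delta_0$ and the box side $\lfloor (h-k)/(k-1)\rfloor$ degenerate there.
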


\begin{proof}	
	We may suppose that $k > 1$, since otherwise we trivially have $rhB = \{ rh \}$ and $hB \cap \Z_{> 0} = \{ h \}$, so that indeed $$rhB = (rh-h) + (hB \cap \Z_{>0}).$$ With this assumption, the $ e_1,\ldots,e_{k-1}$ generate the free abelian subgroup $\Z^{k-1}$ of $\Z^{k}$. Let us abbreviate $$\kappa :=  (4r(k-1))^{(k-1)} \leq (4rk)^k.$$ According to lemma \ref{LemCovLat}, we can find $v_1, \ldots, v_\kappa \in \Z^{k-1}$ such that \begin{equation}
	\Delta_{k-1}(r h) \cap \Z^{k-1} \subseteq \bigcup_{1 \leq i \leq \kappa} \left( v_i + (\Delta_{k-1}(h)^\circ \cap \Z^{k-1}) \right). \label{Zk-1Cov}
	\end{equation} We now apply the affine map $$\mapl{f}{\Z^{k-1}}{\Z^k}{(y_1,\ldots,y_{k-1})}{(y_1,\ldots,y_{k-1} , h r - (y_1 + \cdots + y_{k-1}) )}$$ to \ref{Zk-1Cov} and obtain
	\begin{eqnarray*}
		rh B &=& f(\Delta_{k-1}(rh) \cap \Z^{k-1}) \\
		&\subseteq& \bigcup_{1 \leq i \leq \kappa} f \left( v_i + (\Delta_{k-1}(h)^\circ \cap \Z^{k-1} )\right) \\
		&=& \bigcup_{1 \leq i \leq \kappa} \left( (f(v_i) - (0,\ldots,0,h) ) + ({h B} \cap \Z_{> 0}^k) \right).
	\end{eqnarray*}
	This finishes the proof. 
\end{proof}

\begin{lemma}[Free abelian groups ctd.] \label{ExAbelianBis}
	Let $k,r \in \N$. Consider the free abelian group $(\Z^k,+)$ with a free generating set ${B} $. Then, for every natural number $h$, we have $$ \kappa_{\Z^k,\Z^k}(rh {B} , h {B}) \leq (4rk)^k.$$
\end{lemma}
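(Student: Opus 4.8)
The plan is to split the argument according to the size of $h$, and to exploit one elementary monotonicity property of the covering number: enlarging the set that does the covering can only make the covering number smaller. Since $hB \cap \Z_{>0}^k \subseteq hB$, every covering of $rhB$ by $\Z^k$-translates of $hB \cap \Z_{>0}^k$ is at the same time a covering by $\Z^k$-translates of $hB$, and therefore
\[ \kappa_{\Z^k,\Z^k}(rhB, hB) \leq \kappa_{\Z^k,\Z^k}(rhB, hB \cap \Z_{>0}^k). \]

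For the regime $h > 2k$ this already finishes the proof: Proposition \ref{ExAbelian} bounds the right-hand side above by $(4rk)^k$, and the displayed inequality transfers this bound to $\kappa_{\Z^k,\Z^k}(rhB, hB)$. So no new idea is needed when $h$ is large.

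The genuinely different regime is $1 \leq h \leq 2k$, where Proposition \ref{ExAbelian} does not apply. Here I would discard the geometry entirely and use the crudest covering available. The set $rhB = \{ y \in \Z_{\geq 0}^k : y_1 + \cdots + y_k = rh \}$ is finite, and for any fixed $z \in hB$ each point $y \in rhB$ lies in the single translate $(y-z) + hB$, with $y - z \in \Z^k$. Hence $rhB$ is covered by at most $|rhB| = \binom{rh+k-1}{k-1}$ translates of $hB$, and since this count increases with $h$, for all $h \leq 2k$ it is at most $\binom{2rk+k-1}{k-1}$.

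It then remains only to verify the elementary inequality $\binom{2rk+k-1}{k-1} \leq (4rk)^k$, which I expect to be the sole real computation and the one mild obstacle. Writing $N = k(2r+1)-1 \leq 3rk$ and using $\binom{N}{k-1} \leq N^{k-1}/(k-1)!$, the claim reduces to the transparent inequality $3^{k-1} \leq 4^k\, r\, k!$, which plainly holds for all $r,k \in \N$. The conceptual point is that the cube-packing mechanism underlying Lemma \ref{LemCovLat} and Proposition \ref{ExAbelian} degenerates once $h$ is no longer large compared to $k$; the small-$h$ cases must be treated by hand, and the content of the lemma is exactly that even this trivial covering stays within the target bound $(4rk)^k$.
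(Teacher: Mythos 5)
Your proposal is correct and follows essentially the same route as the paper: reduce the case $h>2k$ to Proposition \ref{ExAbelian} via the monotonicity $\kappa_{\Z^k,\Z^k}(rhB,hB)\leq\kappa_{\Z^k,\Z^k}(rhB,hB\cap\Z_{>0}^k)$, and handle $h\leq 2k$ by the trivial covering of the finite set $rhB$ by singleton translates. The only difference is cosmetic: the paper bounds $|rhB|$ crudely by $(2rk+1)^k$, whereas you compute the exact binomial coefficient and then estimate it, which is a slightly longer but equally valid calculation.
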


\begin{proof}	
	We may assume that $k > 1$, since otherwise we trivially have $$\kappa_{\Z^k,\Z^k}(rh {B} , h {B}) = 1 \leq (4rk)^k.$$ If $h \leq 2k$, then $$\kappa_{\Z^k,\Z^k}(rh {B} , h {B}) \leq |rhB| \leq (2rk+1)^k \leq (4rk)^k.$$ Otherwise, $h > 2k$, so that we may use Prop. \ref{ExAbelian} to also conclude that $$\kappa_{\Z^k,\Z^k}(rhB,hB) \leq \kappa_{\Z^k,\Z^k}(rhB,hB \cap \Z_{>0}^k) \leq (4rk)^k.$$ 
\end{proof}

\begin{proposition}[Abelian case]
	Consider an abelian group $(G,+)$ with a subset $A$ of finite cardinality $k > 0$. Then, for all natural numbers $r$ and $h$, there exists a subset $X$ of $G$ such that $$\vert X \vert \leq (4 r k)^{k} $$ and such that $$ r \cdot (h \cdot A) \subseteq X + (h \cdot A).$$
\end{proposition}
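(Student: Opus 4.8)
The plan is to transport the lattice-covering estimate of Lemma \ref{ExAbelianBis} from the free abelian group $\Z^k$ down to the group $G$ by means of the homomorphism determined by $A$. Since every sumset of $A$ is the image of the corresponding sumset of the standard basis, and since group homomorphisms commute with translation, a covering of $rhB$ by $\Z^k$-translates of $hB$ descends to a covering of $r \cdot (h \cdot A)$ by $G$-translates of $h \cdot A$, using the same number of pieces (or fewer). The bound $(4rk)^k$ is thereby inherited directly.

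Concretely, write $A = \{a_1,\ldots,a_k\}$ and let $B = \{e_1,\ldots,e_k\}$ be the standard free generating set of $(\Z^k,+)$, as in Lemma \ref{ExAbelianBis}. First I would introduce the group homomorphism $\phi : \Z^k \to G$ determined by $\phi(e_i) = a_i$ for $1 \leq i \leq k$, that is, $\phi(n_1,\ldots,n_k) = n_1 a_1 + \cdots + n_k a_k$. Because $\phi$ is a homomorphism it satisfies $\phi(S+T) = \phi(S) + \phi(T)$ for all subsets $S,T \subseteq \Z^k$, and since $\phi(B) = A$ this yields, for every natural number $m$, the identity $\phi(m B) = m \cdot \phi(B) = m \cdot A$. (Unwinding this, an element of the $m$-fold sumset $m \cdot A$ is a sum $a_{i_1} + \cdots + a_{i_m}$, which after collecting equal terms is $n_1 a_1 + \cdots + n_k a_k$ with $n_j \geq 0$ and $n_1 + \cdots + n_k = m$; the set of such integer tuples is exactly $mB$ in the additive convention of the preceding proofs.)

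Next I would invoke Lemma \ref{ExAbelianBis} to obtain $w_1,\ldots,w_N \in \Z^k$ with $N \leq (4rk)^k$ and $rhB \subseteq \bigcup_{1 \leq i \leq N}(w_i + hB)$. Applying $\phi$, and using both the homomorphism property (so that $\phi(w_i + hB) = \phi(w_i) + \phi(hB) = \phi(w_i) + h \cdot A$) and the identity of the previous paragraph, I would conclude
\[ r \cdot (h \cdot A) = (rh) \cdot A = \phi(rhB) \subseteq \bigcup_{1 \leq i \leq N} \bigl( \phi(w_i) + h \cdot A \bigr). \]
Setting $X := \{ \phi(w_1),\ldots,\phi(w_N) \}$ then gives $\vert X \vert \leq N \leq (4rk)^k$ together with $r \cdot (h \cdot A) \subseteq X + (h \cdot A)$, which is exactly what is claimed.

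Since all the genuine combinatorial work has already been done in Lemmas \ref{LemCovLat} and \ref{ExAbelianBis}, the only delicate point is the translation step: one must verify carefully that $m \cdot A = \phi(mB)$ and that $\phi$ carries $\Z^k$-translates of $hB$ to $G$-translates of $h \cdot A$. This is also precisely where the independence from the torsion subgroup of $\langle A \rangle$ comes from — the entire covering is engineered in the torsion-free group $\Z^k$ and merely pushed forward, so no arithmetic of $G$ beyond the homomorphism property is ever invoked. I expect no substantive obstacle beyond this bookkeeping.
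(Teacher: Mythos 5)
Your proposal is correct and follows essentially the same route as the paper: both invoke Lemma \ref{ExAbelianBis} to cover $rhB$ by at most $(4rk)^k$ translates of $hB$ in $\Z^k$, then push the covering forward along the homomorphism $\Z^k \to G$ sending the free generating set onto $A$, using that homomorphisms commute with sumsets and translates. The verification that $\phi(mB) = m\cdot A$, which you rightly flag as the only delicate point, is exactly the step the paper handles via the fundamental theorem of $k$-generated abelian groups.
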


\begin{proof}
	Let $B$ be a free generating set of the free abelian group $(\Z^k,+)$. According to lemma \ref{ExAbelianBis}, we can find a subset $Y$ of $\Z^k$ of size at most $(4 r k)^{k}$ such that 
	$$rh B \subseteq Y + h B.$$ The fundamental theorem of $k$-generated abelian groups now gives us a group homomorphism $\map{\pi}{\Z^k}{G}$ such that $\pi(B) = A$. Define $X := \pi({Y})$. Then $$\vert X \vert \leq \vert {Y} \vert \leq (4 r k)^{k}.$$ By projection, we also obtain $$ r \cdot (h \cdot A) = \pi(r \cdot (h \cdot {B})) \subseteq \pi({Y} + h \cdot {B}) = X + (h \cdot A).$$
\end{proof}

\begin{corollary}
\label{Natresult}
	Let $r,k \in \N$ and let $A$ be a finite subset of cardinality $k$ in an abelian group. Then $A$ is an asymptotic $(r,(4 r k)^{k})$-approximate group.
\end{corollary}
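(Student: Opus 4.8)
The plan is to obtain the Corollary as an essentially immediate consequence of the preceding Proposition (the abelian case), so that the substantive work will already have been carried out in the covering lemmas feeding into it. First I would unwind the definition of an asymptotic $(r,l)$-approximate group in the additive notation adopted for abelian groups. Concretely, I must exhibit a threshold $h_0 \in \N$ together with, for each $h \geq h_0$, a subset $X_h$ of the ambient group satisfying $|X_h| \leq (4rk)^k$ and $(hr) \cdot A \subseteq X_h + (h \cdot A)$, where I have rewritten the product-set condition $A^{hr} \subseteq X_h + A^h$ using the additive powers $A^{hr} = (hr)\cdot A$ and $A^h = h \cdot A$.

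The key observation is the associativity of iterated sum sets: the $r$-fold sum set of the $h$-fold sum set coincides with the $(hr)$-fold sum set, i.e. $r \cdot (h \cdot A) = (hr) \cdot A$. Thus the conclusion of the Proposition, namely $r \cdot (h \cdot A) \subseteq X + (h \cdot A)$ with $|X| \leq (4rk)^k$, is literally the inclusion $(hr)\cdot A \subseteq X + (h \cdot A)$ that I need. I would therefore fix $r$ and $k$, and for each $h$ apply the Proposition to the set $A$ of cardinality $k$ to produce such an $X$; setting $X_h := X$ then discharges the requirement for that value of $h$.

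Finally, I would note that the bound $(4rk)^k$ produced by the Proposition is uniform in $h$, so the single constant $l := (4rk)^k$ serves for every $h$ simultaneously; and since the Proposition is valid for all natural numbers $h$ (not merely the large ones), I may take the threshold to be $h_0 = 1$. Assembling these pieces shows that $A$ is an asymptotic $(r,(4rk)^k)$-approximate group. I do not anticipate any genuine obstacle at this step: the only point requiring a moment's care is the bookkeeping identity $r \cdot (h \cdot A) = (hr) \cdot A$ together with the observation that the constant does not vary with $h$, both of which are routine.
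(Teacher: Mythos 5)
Your proposal is correct and is exactly the argument the paper intends: the corollary is stated immediately after the Proposition on the abelian case and left without proof precisely because, as you observe, $r\cdot(h\cdot A)=(hr)\cdot A$ makes the Proposition's conclusion literally the required inclusion, with the uniform bound $(4rk)^k$ and threshold $h_0=1$. No gaps.
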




\vspace{10mm}
\section{Finite union of arithmetic progressions}\label{APAA}


\begin{proposition}[Semi-linear case in free abelian groups] \label{SemiLinear}
	Let $k,l_1,\ldots,l_k \in \N$ and set $l := l_1 + \cdots + l_k$. Consider the free abelian group $(\Z^{k+l},+)$ of rank $k+l$ with standard basis $$f_1,\ldots,f_k,e_{1,1},\ldots,e_{1,l_1},e_{2,1},\ldots,e_{k,l_k}.$$ Then the union $$A := P(f_1,e_{1,1},\ldots,e_{1,l_1}) \cup \cdots \cup P(f_k,e_{k,1},\ldots,e_{k,l_k})$$ is an asymptotic approximate group. More precisely: for every natural number $r$ and every $h > 2 k$, we have the bound $$\kappa_{\Z^{2k},\Z^{2k}}(rhA,hA) \leq (4rk)^k.$$
\end{proposition}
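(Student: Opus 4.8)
The plan is to reduce the semi-linear statement to the free abelian computation already carried out in Proposition \ref{ExAbelian}, by exploiting the fact that the coordinates in the $f_i$-directions behave \emph{exactly} like the generating set $B = \{e_1,\ldots,e_k\}$ of $\Z^k$, while the progression directions $e_{i,j}$ will be carried along automatically. First I would describe $hA$ and $rhA$ in coordinates. Writing a point of the ambient group $\Z^{k+l}$ as $(x;y)$ with $x = (x_1,\ldots,x_k)$ in the $f_i$-directions and $y = (y_{i,j})$ in the $e_{i,j}$-directions, one checks directly that
$$hA = \bigl\{ (x;y) \mid x_i \geq 0,\ \textstyle\sum_{i} x_i = h,\ y_{i,j}\geq 0,\ \text{and } x_i = 0 \Rightarrow y_{i,j}=0\ \forall j \bigr\},$$
and analogously for $rhA$ with $h$ replaced by $rh$. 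Indeed, an element of $hA$ is a sum of $h$ elements drawn from the $k$ progressions; if $m_i$ of them come from the $i$-th progression then $x_i = m_i$ (so $\sum_i x_i = h$), and the $e_{i,j}$-coordinates range over all nonnegative integers precisely when $m_i \geq 1$, and are forced to vanish when $m_i = 0$.

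The crucial observation is that the projection of $hA$ onto the $f_i$-coordinates is exactly the simplex-slice $hB \cap \Z_{\geq 0}^k = \{x \mid x_i\geq 0,\ \sum_i x_i = h\}$, i.e. the set $hB$ appearing in Proposition \ref{ExAbelian}. So I would apply that proposition with the stronger \emph{strictly positive} target: for $h > 2k$ it supplies vectors $v_1,\ldots,v_\kappa \in \Z^k$ with $\kappa \leq (4rk)^k$ such that
$$rhB \subseteq \bigcup_{1 \leq i \leq \kappa} \bigl(v_i + (hB \cap \Z_{> 0}^k)\bigr).$$
I would then lift each $v_i$ to the vector $V_i := (v_i;0) \in \Z^{k+l}$ whose $e_{i,j}$-components all vanish, and claim that the $V_i$ cover $rhA$ by translates of $hA$.

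To verify this, take any $(x;y) \in rhA$, so that $x \in rhB$; choose $i$ with $x' := x - v_i \in hB \cap \Z_{>0}^k$. Then $(x;y) = V_i + (x';y)$, and $(x';y) \in hA$ because $x'$ lies on the correct slice ($\sum_i x'_i = h$, all coordinates nonnegative) and, since every $x'_i > 0$, the boundary constraint ``$x'_i = 0 \Rightarrow y_{i,j} = 0$'' is vacuous, so the nonnegativity $y_{i,j}\geq 0$ inherited from $(x;y)\in rhA$ is all that is needed. This yields $rhA \subseteq \bigcup_i (V_i + hA)$ and hence $\kappa_{\Z^{k+l},\Z^{k+l}}(rhA,hA) \leq (4rk)^k$. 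The main obstacle — and the reason Proposition \ref{ExAbelian} is invoked in its strictly-positive form rather than Lemma \ref{ExAbelianBis} — is precisely the boundary constraint coupling the $f_i$- and $e_{i,j}$-coordinates: if a translate pushed some $x'_i$ down to $0$, it would force the corresponding $y_{i,j}$ to vanish and the cover could fail for points of $rhA$ with nonzero progression part. Strict positivity of $x - v_i$ is exactly what rules this out, and once it is in hand the $e_{i,j}$-directions require no further work.
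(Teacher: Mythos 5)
Your proposal is correct and follows essentially the same route as the paper: both decompose $hA$ and $rhA$ according to the multiplicities in the $f_i$-directions, invoke Proposition \ref{ExAbelian} in its strictly-positive form to cover $rhF$ by translates of the interior slice $\overline{hF}$ (your $hB \cap \Z_{>0}^k$), and observe that strict positivity makes the boundary constraint on the $e_{i,j}$-coordinates vacuous so that the progression directions come along for free. Your coordinate-wise verification is just a pointwise rephrasing of the paper's sumset inclusions $rhA \subseteq rhF + E$ and $\overline{hF} + E \subseteq hA$.
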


\begin{proof}
	Let $r , h \in \N$ be arbitrary natural numbers with $h > 2k$. Let us introduce some notation. We define the finite set $F := \{ f_1,\ldots,f_k \}$ and, for each $i \in \{ 1,\ldots,k \}$, we define the ``quadrant'' $$E_k := \{ s_1 e_{i,1} + \cdots + s_{l_i} e_{i,l_i} | s_1 ,\ldots,s_{i,l_i} \in \Z_{\geq 0} \}.$$ Then $P(f_i,e_{i,1},\ldots,e_{i,l_i}) = f_i + E_i$. 
	We also set $$E := E_1 + \cdots + E_k  = \Z^l \subseteq \Z^{k+l}.$$ For each $\overrightarrow{n} := (n_1,\ldots,n_k) \in \Z^k$, we also define the set $T_{\overrightarrow{n}}$ as follows. Let $n_{i_1},\ldots,n_{i_t}$ be the non-zero coordinates of $\overrightarrow{n}$. Then $$T_{\overrightarrow{n}} := E_{n_{i_1}} + \cdots + E_{n_{i_t}} \subseteq E \subseteq \Z^{k+l}.$$  
	We now note that 
	\begin{eqnarray}
	hA &=& \bigcup_{\substack{\overrightarrow{n} \in \Z_{\geq 0}^k \\n_1 + \cdots + n_h = h}} (n_1 f_1 + \cdots + n_k f_k + T_{\overrightarrow{n}}) 
	\supseteq \overline{hF} + E,\label{hASupset} \end{eqnarray} 
	where $$\overline{hF} := \bigcup_{\substack{\overrightarrow{n} \in \Z_{\geq 0}^k \\n_1 + \cdots + n_h = h\\n_1 \cdots n_k \neq 0}} (n_1 f_1 + \cdots + n_k f_k).$$ Similarly, we have  
	\begin{eqnarray}
	rhA &=& \bigcup_{\substack{\overrightarrow{n} \in \Z_{\geq 0}^{k} \\n_1 + \cdots + n_{rh} = rh}} (n_1 f_1 + \cdots + n_k f_k + T_{\overrightarrow{n}}) \nonumber\\
	&\subseteq& rh F + E. \label{rhAIncl}
	\end{eqnarray}
	Set $\kappa = (4rk)^k$. According to example \ref{ExAbelian}, we can find elements $v_1 ,\ldots,v_\kappa \in \Z^k \oplus 0 \subseteq \Z^{k} \oplus \Z^l$  such that \begin{equation}
	rhF \subseteq \bigcup_{1 \leq i \leq \kappa}(v_i + \overline{hF}). \label{CoveringrhF}
	\end{equation} By combining \ref{hASupset}, \ref{rhAIncl} and \ref{CoveringrhF}, we obtain $$rhA \subseteq rhF + E \subseteq \bigcup_{1 \leq i \leq \kappa}(v_i + \overline{hF}) + E \subseteq \bigcup_{1 \leq i \leq \kappa}(v_i + hA).$$
\end{proof}


\begin{proposition} \label{PropUnionAP}
	Let $r , k \in \N$. In an abelian group, the union of $k$ unbounded arithmetic progressions is an asymptotic $(r,(4rk)^k)$-approximate group.
\end{proposition}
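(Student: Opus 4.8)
The plan is to reduce the general statement about $k$ arithmetic progressions in an arbitrary abelian group $G$ to the free semilinear case already handled in Proposition \ref{SemiLinear}. Each unbounded arithmetic progression $P(a_i,b_i) = \{ a_i + n b_i \mid n \in \Z_{\geq 0} \}$ is a one-dimensional generalised progression, so the union $A = \bigcup_{1 \leq i \leq k} P(a_i,b_i)$ is exactly the image under a suitable homomorphism of the ``universal'' semilinear set built in Proposition \ref{SemiLinear} with $l_1 = \cdots = l_k = 1$ (hence $l = k$). The idea is that covering statements of the form $rhA \subseteq X + hA$ are preserved under taking homomorphic images, just as in the proof of the finite abelian case, so it suffices to establish the bound in the free model and then push it forward.

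First I would invoke Proposition \ref{SemiLinear} with each $l_i = 1$. This gives the free abelian group $(\Z^{2k},+)$ with standard basis $f_1,\ldots,f_k,e_1,\ldots,e_k$ and the universal semilinear set $$\widetilde{A} := P(f_1,e_1) \cup \cdots \cup P(f_k,e_k),$$ together with the bound $\kappa_{\Z^{2k},\Z^{2k}}(rh\widetilde{A},h\widetilde{A}) \leq (4rk)^k$ for every $r$ and every $h > 2k$. This produces elements $v_1,\ldots,v_\kappa \in \Z^{2k}$, with $\kappa \leq (4rk)^k$, satisfying $rh\widetilde{A} \subseteq \bigcup_{1 \leq i \leq \kappa}(v_i + h\widetilde{A})$.

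Next I would construct the homomorphism $\map{\pi}{\Z^{2k}}{G}$ determined on the basis by $\pi(f_i) = a_i$ and $\pi(e_i) = b_i$; this exists and is unique by the universal property of free abelian groups. By construction $\pi$ sends each generator $P(f_i,e_i)$ onto $P(a_i,b_i)$, so $\pi(\widetilde{A}) = A$, and since $\pi$ is a group homomorphism it commutes with the formation of sumsets: $\pi(h\widetilde{A}) = hA$ and $\pi(rh\widetilde{A}) = rhA$. Setting $X := \pi(\{v_1,\ldots,v_\kappa\})$ we get $|X| \leq \kappa \leq (4rk)^k$, and applying $\pi$ to the free covering yields $rhA \subseteq \bigcup_{1 \leq i \leq \kappa}(\pi(v_i) + hA) = X + hA$. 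This holds for all $h > 2k$, so with threshold $h_0 = 2k+1$ the set $A$ is an asymptotic $(r,(4rk)^k)$-approximate group, as desired.

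I do not anticipate a serious obstacle here, since the real combinatorial work — covering the dilated simplex in the lattice and transferring it to the free semilinear set — is already packaged in Lemma \ref{LemCovLat} and Proposition \ref{SemiLinear}. The only point demanding a little care is verifying that $\pi$ genuinely maps the free universal set $\widetilde{A}$ \emph{onto} $A$ and respects the sumset operation; this is routine once one observes that $\pi(n_1 f_1 + n e_i) = a_i + n b_i$ when the $f$-support is a single index, matching the one-dimensional progressions exactly, and that homomorphisms preserve the inclusions defining the covering number. One small bookkeeping subtlety is the restriction $h > 2k$ inherited from Proposition \ref{SemiLinear}, but this is harmless for an \emph{asymptotic} statement, where we are free to choose any threshold $h_0$.
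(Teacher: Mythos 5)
Your argument is correct and is essentially the paper's own: the paper proves the more general Theorem \ref{ThmSemilinear} by pushing the covering from Proposition \ref{SemiLinear} through a homomorphism $\pi:\Z^{k+l}\to G$ and then recovers Proposition \ref{PropUnionAP} by specialising $l_1=\cdots=l_k=1$, which is exactly the specialisation you carry out directly. The only quibble is the typo $\pi(n_1 f_1 + n e_i)$ where you mean $\pi(f_i + n e_i)$; otherwise nothing is missing.
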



In fact, we will prove a slightly more general statement.

\begin{theorem} \label{ThmSemilinear}
	Let $r , k \in \N$. In an arbitrary abelian group, the union of $k$ unbounded linear sets is an asymptotic $(r,(4rk)^k)$-approximate group.
\end{theorem}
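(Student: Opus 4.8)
The plan is to reduce the general statement to the concrete ``standard model'' already handled in Proposition \ref{SemiLinear}, exactly as the abelian case was reduced to the free abelian case via the fundamental theorem of finitely generated abelian groups. So first I would set up the right free model. Suppose $A = \mathbf{X}_1 \cup \cdots \cup \mathbf{X}_k$ where each $\mathbf{X}_i = P(a_i, b_{i,1}, \ldots, b_{i,l_i})$ is an unbounded linear set of dimension $l_i$ inside the abelian group $(G,+)$. Set $l := l_1 + \cdots + l_k$ and consider the free abelian group $\Z^{k+l}$ with the standard basis $f_1,\ldots,f_k, e_{1,1},\ldots,e_{k,l_k}$ as in Proposition \ref{SemiLinear}, together with the corresponding union $\tilde A := P(f_1,e_{1,1},\ldots,e_{1,l_1}) \cup \cdots \cup P(f_k,e_{k,1},\ldots,e_{k,l_k})$.

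Next I would construct the comparison homomorphism. Since $\Z^{k+l}$ is free on the listed basis, there is a unique group homomorphism $\map{\pi}{\Z^{k+l}}{G}$ sending $f_i \mapsto a_i$ and $e_{i,j} \mapsto b_{i,j}$. The key observation is that $\pi$ is compatible with the progression structure: because the maps $n \mapsto nb$ are just iterated addition and $\pi$ is additive, one checks directly that $\pi(P(f_i,e_{i,1},\ldots,e_{i,l_i})) = P(a_i,b_{i,1},\ldots,b_{i,l_i}) = \mathbf{X}_i$, and hence $\pi(\tilde A) = A$. Moreover $\pi$ respects sum sets, so $\pi(h\tilde A) = hA$ and $\pi(rh\tilde A) = rhA$ for every $h,r$.

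Then I would transport the covering. Fix $r \in \N$ and take any $h > 2k$. Proposition \ref{SemiLinear} supplies elements $v_1,\ldots,v_\kappa \in \Z^{k+l}$ with $\kappa \leq (4rk)^k$ such that $rh\tilde A \subseteq \bigcup_{1 \leq i \leq \kappa}(v_i + h\tilde A)$. Applying $\pi$ and using that $\pi$ commutes with translation and union gives
\begin{equation}
rhA = \pi(rh\tilde A) \subseteq \bigcup_{1 \leq i \leq \kappa}\bigl(\pi(v_i) + \pi(h\tilde A)\bigr) = \bigcup_{1 \leq i \leq \kappa}\bigl(\pi(v_i) + hA\bigr).
\end{equation}
Setting $X := \{\pi(v_1),\ldots,\pi(v_\kappa)\}$ yields $|X| \leq \kappa \leq (4rk)^k$ and $rhA \subseteq X + hA$, which is precisely the asymptotic $(r,(4rk)^k)$-approximate group condition with threshold $h_0 = 2k+1$.

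The only genuinely delicate point is the compatibility claim $\pi(h\tilde A) = hA$, and in particular that the inclusion $rh\tilde A \subseteq \bigcup_i (v_i + h\tilde A)$ survives the (generally non-injective) projection $\pi$ without losing elements. This direction is safe because applying a function to both sides of a set inclusion preserves it, and $\pi$ genuinely commutes with the additive operations defining translation, sum sets, and unions; no injectivity of $\pi$ is needed. I would, however, be careful to verify the sum-set identity $\pi(hS) = h\,\pi(S)$ explicitly, since this is where the additivity of $\pi$ is really used, and to note that unbounded progressions (as opposed to finite sets) cause no trouble here precisely because Proposition \ref{SemiLinear} already handles the unbounded model and $\pi$ carries the whole infinite structure over faithfully as a surjection onto $A$.
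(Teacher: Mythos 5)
Your proposal is correct and follows essentially the same route as the paper: both reduce to the standard model of Proposition \ref{SemiLinear} via a homomorphism $\pi : \Z^{k+l} \to G$ supplied by the universal property of the free abelian group, and then push the covering forward, using only that images preserve inclusions, translates, unions, and sum sets. The one detail you flag as delicate, namely $\pi(h\tilde{A}) = hA$, is handled implicitly in the paper in exactly the way you describe, and your explicit justification of it is a harmless addition.
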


\begin{proof}
	Let $$P(F_1,E_{1,1},\ldots,E_{1,l_1}), \ldots, P(F_k,E_{k,1},\ldots,E_{k,l_k})$$ be the unbounded linear sets in the abelian group $(G,+)$ and let $B$ be their union. Set $l := l_1 + \cdots + l_k$. The fundamental theorem of $(k+l)$-generated, abelian groups (with standard basis $f_1,\ldots,f_k,e_{1,1},\ldots,e_{k,l_k}$) then gives us a homomorphism $$\map{\pi}{\Z^{k+l}}{G}$$ such that
	$$\pi(e_{1,1}) = E_{1,1} , \ldots , \pi(e_{k,l_k}) = E_{k,l_k} \text{ and }\pi(f_1) = F_1 , \ldots , \pi(f_k) = F_k.$$	Define $$A := P(f_1,e_{1,1},\ldots,e_{1,l_1}) \cup \cdots \cup P(f_k,e_{k,1},\ldots,e_{k,l_k}).$$ Then $\pi$ maps each $P(f_i,e_{i,1},\ldots,e_{i,l_l})$ onto  $P(F_i,E_{i,1},\ldots,E_{i,l_l})$, so that $\pi(A) = B$. We next define the threshold $h_0 := 2 k+1$ and we let $h$ be a natural number, at least $h_0$. According to Proposition \ref{SemiLinear}, there exist elements $w_1 , \ldots , w_{(4rk)^k} \in \Z^{k+l}$ such that 
	$$rhA \subseteq \bigcup_{1 \leq j \leq (4rk)^k} (w_j + hA).$$ By applying the homomorphism, we obtain $$ rhB = \pi(rhA) \subseteq \pi\left(\bigcup_{1 \leq j \leq (4rk)^k} (w_j + hA) \right) = \bigcup_{1 \leq j \leq (4rk)^k} (\pi(w_j) + hB) .$$
\end{proof}

By specialising the constants $l_1 = \cdots = l_k = 1$, we recover Proposition \ref{PropUnionAP}.


\begin{corollary}
\label{mainresult}
	 Let $r,k \in \N$ and consider a union $A$ of $k$ (bounded or unbounded) generalised arithmetic progressions in an abelian group. Then there exists a natural number $l$ such that $A$ is an asymptotic $(r,l)$ approximate group.
\end{corollary}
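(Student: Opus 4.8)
The plan is to reduce the mixed (bounded-or-unbounded) case to the purely unbounded case already settled in Theorem \ref{ThmSemilinear}. The key observation is that a \emph{bounded} generalised arithmetic progression is a \emph{finite} set: indeed, $$\left| P_{m_1,\ldots,m_d}(a,b_1,\ldots,b_d) \right| \leq (m_1 + 1) \cdots (m_d + 1) < \infty.$$ So the bounded pieces contribute only finitely many group elements in total, and the entire difficulty is simply to absorb this finite part into the framework of semilinear sets.

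First I would split the $k$ progressions into those that are unbounded, say $k_1$ of them, and those that are bounded, say $k_2 := k - k_1$ of them. Writing $N$ for the total number of distinct elements lying in the union of the bounded pieces, the finiteness observation gives $N < \infty$. Next I would rewrite this finite set as a union of $N$ singletons and observe that each singleton $\{a\}$ is itself an unbounded linear set — for instance $\{a\} = P(a)$ viewed as a zero-dimensional generalised arithmetic progression, or equivalently $P(a,0) = \{ a + n \cdot 0 \mid n \in \Z_{\geq 0} \}$ under the paper's conventions. Consequently $A$ is a finite union of $$k' := k_1 + N$$ unbounded linear sets, that is, a semilinear set.

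Having arranged this, I would simply invoke Theorem \ref{ThmSemilinear} with $k'$ in place of $k$: the set $A$ is an asymptotic $(r,(4rk')^{k'})$-approximate group, so the conclusion holds with $l := (4rk')^{k'}$.

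I do not expect a genuine obstacle here: the argument is a reduction, and since the corollary only asserts the \emph{existence} of some $l$ (which is allowed to depend on $A$, and in particular on the sizes $m_i$ of the bounded progressions), there is no need to control how $l$ grows with the bounded data. The only points requiring a little care are the bookkeeping of the new count $k'$ and the verification that a singleton indeed qualifies as an unbounded linear set in the sense of the paper's definitions.
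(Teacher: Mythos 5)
Your proposal is correct and follows essentially the same route as the paper: the paper likewise writes each bounded progression $P_m(a,b)$ as the union of the $m+1$ singleton unbounded progressions $P(a,0), P(a+b,0), \ldots, P(a+mb,0)$ (and each bounded linear set as a finite union of unbounded ones) and then invokes Theorem \ref{ThmSemilinear}. Your additional remark that $l$ may depend on the bounded data is accurate and consistent with the statement proved.
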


Informally: semi-linear subsets of abelian groups are asymptotic approximate groups.

\begin{proof}
	Any \emph{bounded} arithmetic progression $P_m(a,b)$ is the union of the $m+1$ \emph{unbounded} arithmetic progressions $P(a,0),P(a+b,0),\ldots$ and $P(a+mb,0)$. More generally, every \emph{bounded} linear set is the union of finitely-many \emph{unbounded} linear sets. So using Theorem \ref{ThmSemilinear} we obtain the claim.
\end{proof}

\vspace{10mm}
\section{Concluding remarks}




We conclude with a simple remark: a group $G$ is virtually-nilpotent if it is generated by a finite, asymptotic $(r,l)$ approximate group (with $r \geq 2$). Indeed: lemma \ref{LemmaGrowth} shows that the growth of $G$ is polynomial, so that we may use Gromov's characterisation of virtually-nilpotent groups [see \cite{myfav120}].

\begin{lemma}\label{Lem5.2}
	Let $f : \mathbb{N}\rightarrow \mathbb{N}$ be a monotonically increasing function. Suppose that there exist natural numbers $r,L$ at least $2$ such that for all $h \in \N$ we have the property $$\frac{f(rh)}{f(h)} < L.$$
	Then $f$ is bounded from above by some polynomial function. 
\end{lemma}
\begin{proof}
	We have, 
	\begin{align*}
		f(h) & = f(r^{\log_{r}h})\\
			 & \leqslant f(r^{\lceil \log_{r}h\rceil}) \text{ (f is monotonic)}\\
			 & = f(r.r^{\lceil \log_{r}h\rceil - 1})\\
			 & \leqslant L f(r^{\lceil \log_{r}h\rceil - 1})  \text{ (by the hypothesis on f)}\\
			 & \leqslant L^{\lceil \log_{r}h\rceil}f(1) \text{ (by induction)}\\
			 & \leqslant L.L^{\frac{\log_{L}h}{\log_{L}r}}f(1)\\
			 & \leqslant Lf(1)h^{\frac{1}{\log_{L}r}}.
	\end{align*}
	Thus $f(h)$ is bounded above by some polynomial in $h$ with maximum degree $\lceil\frac{1}{ \log_{L}r }\rceil $.
\end{proof}

\begin{lemma} \label{LemmaGrowth}
	Let $r,L \in \N$ with $r \geq 2$. Let $G$ be an arbitrary group and $A$ be a finite subset of $G$. Suppose the growth rate of the iterated powers, $A,A^{2},\cdots, A^{n},\cdots$ is super-polynomial. Then $A$ cannot be an asymptotic $(r,L)$ approximate group.
\end{lemma}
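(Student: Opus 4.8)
The plan is to argue by contradiction, using Lemma \ref{Lem5.2}. Suppose that the finite set $A$ were an asymptotic $(r,L)$ approximate group, and consider the growth function $f(h) := |A^h|$, where $A^h$ denotes the $h$-fold product set. My first task is to record that $f$ is monotonically increasing. To see this, fix any element $a_0 \in A$; since right-multiplication by $a_0$ is a bijection of $G$, the set $A^h a_0$ has the same cardinality as $A^h$, and the inclusion $A^h a_0 \subseteq A^h A = A^{h+1}$ then gives $f(h) \leq f(h+1)$.

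Next I would translate the approximate-group hypothesis into a bound on the ratio $f(rh)/f(h)$. By Definition \ref{AAppgr}, there is a threshold $h_0$ such that, for every $h \geq h_0$, we can write $A^{hr} \subseteq X_h A^h$ with $|X_h| \leq L$. Taking cardinalities yields $f(rh) = |A^{rh}| \leq |X_h|\,|A^h| \leq L\, f(h)$, so that $f(rh)/f(h) \leq L$ for all $h \geq h_0$.

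The only mismatch with the hypothesis of Lemma \ref{Lem5.2} is that the latter requires the strict ratio bound for \emph{all} $h \in \N$ together with a constant that is at least $2$, whereas the estimate above holds only for $h \geq h_0$. This gap is harmless: for the finitely many indices $1 \leq h < h_0$ (if any) the ratio $f(rh)/f(h)$ is a finite real number, so I can choose a natural number $$L' := \max\Big( L + 1,\ 1 + \big\lceil \max_{1 \leq h < h_0} f(rh)/f(h) \big\rceil \Big) \geq 2$$ for which $f(rh)/f(h) < L'$ holds for every $h \in \N$.

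With $r \geq 2$ and $L' \geq 2$, Lemma \ref{Lem5.2} now applies to the monotone function $f$ and shows that $f(h) = |A^h|$ is bounded above by a polynomial in $h$. This contradicts the assumption that the growth of $A, A^2, \ldots$ is super-polynomial, which completes the proof. The only steps requiring genuine care are the monotonicity of $f$, needed so that Lemma \ref{Lem5.2} is applicable, and the bridging of the threshold $h_0$ into a uniform bound; the remaining manipulations are routine.
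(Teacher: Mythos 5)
Your proof is correct and takes essentially the same route as the paper: argue by contradiction, extract the ratio bound $f(rh) \leq L\, f(h)$ for $h \geq h_0$ from the covering $A^{rh} \subseteq X_h A^h$, and invoke the iteration of Lemma \ref{Lem5.2} to force polynomial growth. The only divergence is in how the threshold $h_0$ is bridged: the paper re-runs the iteration of Lemma \ref{Lem5.2} by hand, stopping at the modified base constant $M := \max\{ f(h) : 1 \leq h \leq r^{\lceil \log_r h_0 \rceil}\}$, whereas you inflate $L$ to a constant $L' \geq 2$ for which the strict ratio bound holds for \emph{all} $h$, so that Lemma \ref{Lem5.2} applies as a black box --- a slightly cleaner packaging, and you also explicitly verify the monotonicity of $f$ (via $A^h a_0 \subseteq A^{h+1}$), which the paper uses but leaves implicit.
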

\begin{proof}
	 For each $n \in \N$, we define $f(n) := |A^n|$. We suppose that $A$ is an asymptotic $(r,L)$ approximate group and we will show that $f(n)$ is bounded by a polynomial in $n$. By assumption, there exists a threshold $h_0 \in \N$ such that for every $h \in \N$ with $h \geq h_0$ there exists a subset $X_h \subset G$ satisfying $|X_{h}|\leqslant L$ and 
	\begin{align*}
			A^{rh} & \subseteq X_{h}.A^{h}\\
			 \Rightarrow |A^{rh}| & \leqslant L.|A^{h}|\\
			\Rightarrow f(rh) & \leqslant Lf(h).			
	\end{align*}
	We wish to apply the strategy of Lemma \ref{Lem5.2}. However, the above is only known to hold for all $h\geqslant h_{0}$. So we cannot apply it directly. Let 
	$$M:= \max\lbrace f(h) : 1\leqslant h\leqslant r^{\lceil \log_{r}h_{0}\rceil}\rbrace.$$
	Consider $f(h), h\in\mathbb{N}$. Let  $k =  \max \lbrace \lceil \log_{r}h\rceil  - \lceil\log_{r}h_{0}\rceil , 0 \rbrace $. We have,
	\begin{align*}
		f(h) & = f(r^{\log_{r}h})\\
			& \leqslant f(r^{\lceil \log_{r}h\rceil}) \\
			& \leqslant L^{k} f(r^{\lceil \log_{r}h\rceil - k}) \\
			& \leqslant L^{k}M\\
			& \leqslant Mh^{\frac{1}{\log_{L}r}}.
	\end{align*}
	We see that $f(h)$ is bounded above by a polynomial of degree at most $\lceil \frac{1}{\log_{L}r}\rceil$ in $h$. This contradiction finishes the proof. 
	\end{proof}

\section{Acknowledgements}
We are grateful to Christopher Cashen for a number of helpful discussions and especially for pointing out Lemma \ref{Lem5.2}. The first author would like to acknowledge the fellowship of the Erwin Schr\"odinger International Institute for Mathematics and Physics (ESI) and would also like to thank the Fakult\"at f\"ur Mathematik, Universit\"at Wien where a part of the work was carried out. The second author would like to acknowledge the FWF grant P30842-N35 of the Austrian Science Fund.

\providecommand{\bysame}{\leavevmode\hbox to3em{\hrulefill}\thinspace}
\providecommand{\MR}{\relax\ifhmode\unskip\space\fi MR }
\providecommand{\MRhref}[2]{%
	\href{http://www.ams.org/mathscinet-getitem?mr=#1}{#2}
}
\providecommand{\href}[2]{#2}

\end{document}